\def\lln{\ell_n}
\def\llnm{\ell_n^m}
\def\lan{\bar\ell_n}
\def\lanm{\bar\ell_n^m}
\def\rem{r^m}
\def\renm{r_n^m}
\def\Z{C}
\def\mX{\mathcal{X}}
\def\mN{\mathcal{N}}
\def\mY{\mathcal{Y}}
\def\Ex{\mathbb{E}}
\def\VAR{{\rm VAR}}
\def\Rl{\mathbb{R}}
\def\th{\theta}
\def\tht{\th_\star}
\def\thn{\hat\th_n}
\def\thnm{\hat\th_n^m}
\def\Zet{{Z}}
\def\gie{{\phi}}
\def\grad{\nabla}
\def\hess{\nabla^2}
\def\app{_{\rm approx.}}
\def\Dnm{D_n^m}
\def\topr{\to_{\rm p}}
\def\tod{\to_{\rm d}}
\def\op{{o_{\rm p}}}
\title{Asymptotics of maximum likelihood estimators based on Markov chain Monte Carlo methods}
\author{
B{\l}a\.{z}ej Miasojedow
\footnote{Institute of Applied Mathematics and Mechanics,
University of Warsaw, Banacha 2, 02-097 Warsaw, Poland, {\tt  	B.Miasojedow@mimuw.edu.pl}
},
Wojciech Niemiro
\footnote{Institute of Applied Mathematics and Mechanics,
University of Warsaw, Banacha 2, 02-097 Warsaw, Poland, {\tt wniem@mimuw.edu.pl}
and Faculty of Mathematics and Computer Science, Nicolaus Copernicus University,  Chopina 12/18, 87-100, Toru\'n, Poland {\tt wniem@mat.umk.torun.pl}
}
and Wojciech Rejchel
\footnote{corresponding author, Faculty of Mathematics and Computer Science,
Nicolaus Copernicus University,  Chopina 12/18, 87-100, Toru\'n, Poland, +48 566112943,
{\tt wrejchel@gmail.com}}
}
\date{}
\begin{document}

\maketitle       
   
\begin{abstract}
In many complex statistical models maximum likelihood estimators cannot be calculated. 
In the paper we solve this problem using Markov chain Monte Carlo approximation of the true likelihood. 
In the main result we prove asymptotic normality of the estimator, when both sample sizes 
(the initial and Monte Carlo one) tend to infinity. Our result can be applied to models with intractable norming constants
and missing data models. 
\end{abstract}                           

{\bf Keywords}:  empirical process, intractable norming constant, Markov chain, 
maximum likelihood estimation, missing data model, Monte Carlo method.

\section{Introduction}\label{Sec:Introduction}

Maximum likelihood (ML) is a well-known and often used method in estimation
of parameters in statistical models. However, for many complex models exact calculation of ML estimators is very difficult or
impossible. Such problems arise in missing data models or if considered densities are known only up to intractable
norming constants, for instance in Markov random fields or spatial statistics. 
In missing data models many Monte Carlo (MC) or Markov chain Monte Carlo (MCMC) methods have been proposed to approximate the observed likelikelihood 
\cite{GelfandCarlin1993, Geyer1994,Kongetal1994}. There are also Monte Carlo methods for maximum likelihood that do not approximate the likelihood 
\cite{Penttinen1984, WeiTanner1990, Younes1988} and non-Monte Carlo methods dedicated to this problem \cite{DempsterLairdRubin1977}. 
The literature dedicated to the problem  of intractable norming constant is extensive as well. Among the proposed methods
we should  mention the maximum pseudolikelihood \cite{Besag1974}, 
Monte Carlo maximum likelihood (MCML) \cite{Cappe2002, Geyer1994} and Markov chain Monte Carlo maximum likelihood (MCMCML)
\cite{GeyerThom1992, Geyer1994}. In the current paper we focus on the MCMCML method.

In influential papers \cite{GeyerThom1992, Geyer1994} the Authors prove consistency and asymptotic normality of  MCML estimators 
under the assumption that the initial sample is fixed, and only the Monte Carlo sample size tends to infinity.
Both sources of randomness (one due to the initial sample and the other due to Monte Carlo simulations) are considered in
\cite{Cappe2002, MNPR2016a, GeyerSung2007,  Imput2010}. The authors of the first mentioned paper apply the general importance sampling recipe. 
They show that for their scheme of simulations,
the Monte Carlo sample size has to grow exponentially fast to ensure consistency of the estimator.
As the remedy for this problem they propose to use a preliminary estimator which is consistent. Another
possibility to overcome this problem is proposed in \cite{GeyerSung2007}.
The log-likelihood is first decomposed into independent summands and then importance sampling is applied.
Papers \cite{Cappe2002, GeyerSung2007}   describe asymptotic properties of MCML estimators only for models with missing data while 
\cite{MNPR2016a} investigates models with intractable norming constants and explanatory variables.
However, in \cite{Cappe2002, MNPR2016a, GeyerSung2007} 
it is assumed  that one can efficiently generate independent samples from a given distribution. 
In many practical problems it is impossible and instead of independent sampling one has to use Markov chain simulation. 
The goal of the current paper is to investigate asymptotic properties of estimators obtained in this way, i.e. MCMCML. 
The MCMCML approach  has been successfully applied in practice 
\cite{GeyerThom1992, HuWu1998, WuHu1997, Imput2010}. 
However, to the best of our knowledge, there is no full theoretical justification of it in the literature. 
Our paper fills this gap and can be viewed as a generalization of the results contained in \cite{MNPR2016a, GeyerSung2007}.
The methods used in these papers are extended and developed to work in the case where the MC sample is a Markov chain. 

The main result of the current paper is the asymptotic normality of MCMCML estimators.
We focus on models with intractable norming constants but the main result can be directly applied also to models with  missing data. 
We prove our theorem using classical methods from the empirical process theory, but our argumentation is not standard, 
because we consider two sources of randomness (due to the initial sample and the MCMC sample). 
Moreover, the MCMC approximation, given in \eqref{eq:fr} below, is a sum of two expressions with a rather complicated
dependence structure (the second sum is conditionally a functional of a Markov chain but also depends on the initial sample).
Even though we work in a more difficult scenario, we significantly simplify argumentation and weaken regularity assumptions comparing to \cite{GeyerSung2007}.
It is discussed in detail in Section \ref{sec:main_result}.  

The paper is organized as follows: in Section \ref {sec:model_desc} we describe the models under consideration. 
In Section \ref{sec:main_result} we state the main result (Theorem 
\ref{with_cov}) and show its applications. The proof of this theorem is given in Section \ref{sec:proof}.

\section{Description of the models}
\label{sec:model_desc}

In the paper we consider models with intractable norming constants and  models with missing data. We 
focus on the former but the latter can be investigated similarly.

\subsection{Models with intractable constants}
\label{subsec:intr_constant}
 
We consider the following parametric model with covariates
\begin{equation}\nonumber
p(y|x,\th)=\frac{1}{\Z(x,\th)}f(y|x,\th),
\end{equation}
where $y \in \mY \subset \mathbb{R}^d$ is a response variable, $x \in \mX \subset
\mathbb{R}^l$ is a covariate or ``explanatory'' variable (random or deterministic), $\th \in \mathbb{R} ^p$ is a parameter
describing the relation between $y$ and $x$. The norming constant,
\begin{equation}\nonumber
\Z(x,\th) = \int f(y|x,\th) dy,
\end{equation}
is difficult or intractable.

We assume that the data consist of $n$ independent observations $(Y_1,X_1),\ldots,$ $(Y_n,X_n).$
If we regard covariates as random, then we assume that these pairs form an i.i.d.\  sample from a joint distribution
with a density $g(y,x)$. Alternatively, $x_i$ can be regarded as deterministic and then we assume that random variable
$Y_i$ has a probability distribution $g_i$ which depends on $x_i$. Both cases can be analysed very similarly.
For simplicity we focus attention on the model with random covariates.
It is not  necessary to assume that $g(y | x)=p(y|x,\th_0)$ for some $\th_0$. The case when no such $\th_0$ exists, i.e.\
the model is misspecified, makes the considerations only slightly more difficult. Thus,
let us consider the following log-likelihood
\begin{eqnarray}\nonumber \label{eq:likn}
 \lln(\th) &=&\log p(Y_1,\ldots,Y_n|X_1,\ldots,X_n,\th)\\
           &=&\sum_{i=1}^{n} \log  f(Y_i|X_i,\th)- \sum_{i=1}^{n} \log {\Z(X_i,\th)}  .
\end{eqnarray}
The first term in (\ref{eq:likn}) is easy to compute while the second
one is approximated by Markov chain Monte Carlo. Let $h(y)$ be an importance sampling (instrumental)
distribution and  note that
\begin{equation}\nonumber
{\Z(x,\th)}=\int f(y|x,\th) d y=\int \frac{f(y|x,\th)}{h(y)}h(y) dy=\Ex_{Y\sim h}\frac{f(Y|x,\th)}{h(Y)}
\end{equation}
for fixed $\th, x.$
Therefore, a natural approximation of the norming constant is
\begin{equation}
\label{Cm}
C_m(x,\th)=\frac{1}{m}\sum_{k=1}^{m}\frac{f(Y^k|x,\th)}{h(Y^k)} \:,
\end{equation}
where $Y^1,\ldots, Y^m$ is a  sample drawn from $h$ or, which is more realistic and is considered in the current paper,
$Y^1,\ldots, Y^m$ is a Markov chain with $h$ being a density of its stationary distribution. The MCMC sample is independent of the initial sample. From LLN for Markov chains  we have 
$
C_m(x,\th)
\rightarrow C(\th),
$ when $m \rightarrow \infty$ and $\th,x$ are fixed.
Thus, an MCMC approximation of the log-likelihood $\lln(\th)$ is
\begin{equation} \label{eq:fr}
\llnm(\th) =\sum_{i=1}^{n }\log f(Y_i|X_i, \th)-\sum_{i=1}^{n}\log C_m(X_i, \th) ,
\end{equation}
and its maximizer is denoted by $\thnm .$

Let us note that the general Monte Carlo recipe 
can also lead to approximation schemes different from \eqref{eq:fr}. For instance, we could generate $n$ independent
MCMC samples instead of one, i.e.\  $Y_i^1,\ldots,Y_i^m \sim h_i, i=1,\ldots,n$ and use
the $i$-th sample to approximate $\Z (x_i, \theta).$ Using this scenario one can obtain
estimators with better convergence rates, but at the cost of increased  computational
complexity. Another scheme, proposed in \cite{Cappe2002}, approximates the log-likelihood by
\begin{equation}\nonumber
\sum_{i=1}^{n }\log f(Y_i|X_i, \th)-\log \frac{1}{m}\sum_{k=1}^{m}\prod_{i=1}^{n}\frac{f(Y_i^k|X_i, \th)}{h_i(Y_i^k)}. \end{equation}
{However, this scheme leads to estimators with unsatisfactory asymptotics unless a preliminary estimator is used.
Thus, we focus our attention only on $(\ref{eq:fr}).$

\subsection{Models with missing data}
\label{subsec:miss_model}

These models are the same as considered e.g.\ in \cite{GeyerSung2007} but our notation is slightly different. 
We assume that $x$ is observed and $y$ is missing in the complete data $(x,y).$ 

The joint density is denoted by $f(x,y|\th)$ while the unavailable marginal density is $f(x|\th)=\int f(x,y|\th) \d y.$ 
Let the observed data $X_1, \ldots, X_n$ be i.i.d.\ from some density $g.$ Obviously, a maximizer of the log-likelihood
$$
\sum_{i=1}^n \log f(X_i|\th)
$$
cannot be calculated. We use a Markov chain $Y^1, \ldots, Y^m$ with the stationary distribution $h,$ which is independent of $X_1, \ldots, X_n,$  
to approximate the unknown marginal density $f(x|\th)$ by 
$$
\frac{1}{m}\sum_{k=1}^{m}\frac{f(x,Y^k|\th)}{h(Y^k)} \:.
$$
Therefore, the MCMCML estimator is a maximizer of 
$$
\sum_{i=1}^n \log \left[\frac{1}{m}\sum_{k=1}^{m}\frac{f(X_i,Y^k|\th)}{h(Y^k)}\right]\:,
$$
which is equivalent to 
\begin{equation}
\label{approx_miss}
\sum_{i=1}^n \log f (X_i|\th) + \sum_{i=1}^n \log  \frac{1}{m}\sum_{k=1}^{m}\frac{f(Y^k|X_i,\th)}{h(Y^k)}  \:.
\end{equation}

\section{Main result}
\label{sec:main_result}

In this section we state the key theorem of the paper and describe its applications.
We focus on models with intractable norming constants. A similar result for missing data models 
is only briefly commented on, because it is a straightforward modification of our main theorem.

We  need the following notations: the MCMC approximation (\ref{eq:fr}) multiplied by $\frac{1}{n}$ is denoted by $\lanm(\th)$ and decomposed as follows
\begin{equation} \label{eq:likm}
\lanm(\th) =\lan(\th) - \rem_n(\th),
\end{equation}
where
\begin{eqnarray*}
\lan(\th)  &=& \frac{1}{n} \sum_{i=1}^{n}\left[ \log  f(Y_i|X_i,\th)-\log {\Z(X_i,\th)}\right] = \frac{1}{n} \sum_{i=1}^{n} \log  p(Y_i|X_i,\th),  \\
\rem_n(\th) &=& \frac{1}{n} \sum_{i=1}^n \left[ \log  C_m (X_i, \th) -
\log {\Z(X_i,\th)} \right]= 
\frac{1}{n} \sum_{i=1}^n  \log  \Zet _m (X_i, \th)
\;,
\end{eqnarray*}
where for fixed $\th,x$ 
$$
\Zet _m (x,\th) = \frac{1}{m} \sum_{k=1}^m
\frac{p(Y^k|x,\th)}{ h(Y^k)}\:.
$$
Let $\tht$ be a  maximizer of $\Ex_{(Y,X)\sim g} \log p(Y|X,\th)$, i.e.\ the
 Kullback-Leibler projection.
Finally, let symbols $\grad$ and $\hess$ denote derivatives with respect to $\th$ and introduce the following notations:
\begin{eqnarray*}
\Psi(y|x)& =& \frac{\grad p(y|x,\tht)}{h(y)} \:,\\
\bar{\Psi} (y)&=& \Ex_{X \sim g} \Psi(y|X). 
\end{eqnarray*}
Now we can state the main result for the model with intractable constants and covariates.

\begin{thm}\label{with_cov}
Suppose the following assumptions are satisfied:
\begin{enumerate}
\item $(Y^k)_{k \geq 1}$ is a  reversible and geometrically ergodic homogeneous Markov chain on the state space $\mY$ which has stationary distribution with a density $h$
 and initial distribution with density $q$ such that $\left\Vert q/{h}\right\Vert_\infty =\sup_y |q(y)/h(y)| < \infty $.  
\item second partial derivatives of $p(y |x,\th)$ with respect to $\th$ exist and are continuous for all $y$ and $x$, and may be passed under the integral sign in
$\int   p(y|x, \th) dy$ for fixed $x$,
\item $\thnm$ is a consistent estimator of $\tht,$
\item $\Ex _{(Y,X) \sim g } \nabla \log p(Y|X, \tht)=0, $ matrices
\begin{eqnarray*}
V&=&\VAR_{(Y,X)\sim g}\grad \log p(Y|X,\tht), \\
D&=&
\Ex_{(Y,X)\sim g} \hess  \log p(Y|X,\tht)
\end{eqnarray*}
are well defined and matrix $D$ is negative definite,
\item the expectation $\Ex_{Y \sim h, X\sim g}|\Psi(Y|X)|^2$ is finite, 
\item $\sup\limits_{\th\in U} | \hess \lan (\th) - \Ex_{(Y,X)\sim g} \hess  \log p(Y|X,\theta) | \topr 0 , \quad n
\rightarrow \infty,$  where  $U= \{\th : |\th - \tht| \leq \delta\} $ is some
neighbourhood of $\tht$ ($\delta>0$).  
\item \begin{enumerate}
\item $\sup\limits_{x \in \mX} |\Zet _m (x,\tht) - 1|  \topr 0, \quad
m \rightarrow \infty,$
 \item   $\sup\limits_{x \in \mX} |\nabla \Zet _m (x,\tht)| \topr 0, \quad
m \rightarrow \infty,$
\item  $\sup\limits_{\th \in U ,x \in \mX} |\hess \Zet _m (x,\th) | \topr 0, \quad
m \rightarrow \infty,$ for some neighbourhood $U$ of $\tht$.
\end{enumerate}
\end{enumerate}
Then 
\begin{equation}\nonumber
\left( \frac{V}{n} + \frac{W}{m} \right)^{- \frac{1}{2}}\: D \left(\thnm -
\tht
\right) \tod \mathcal{N} (0,I), \quad n,m \rightarrow \infty,
 \end{equation}
where 
\begin{equation}\nonumber
W=\VAR_{h}  \bar\Psi(Y^1) +2 \sum_{k=2}^\infty {\rm COV}_h \left(\bar\Psi(Y^1), \bar\Psi(Y^k)\right),
\end{equation}
where $\VAR_{h}$ and ${\rm COV}_h$ denote the stationary covariance matrices.
\end{thm}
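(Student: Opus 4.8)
The plan is to follow the classical M-estimation route: differentiate the estimating equation, isolate the two independent sources of randomness, and assemble a mixed central limit theorem. By consistency (assumption~3) we may restrict attention to the neighbourhood $U$, where assumption~2 ensures $\lanm$ is twice differentiable, so that with probability tending to one $\thnm$ is an interior maximizer and $\grad\lanm(\thnm)=0$. An exact first-order Taylor expansion then gives
\[
D(\thnm-\tht)=-D\,H_{n,m}^{-1}\,\grad\lanm(\tht),\qquad H_{n,m}=\int_0^1\hess\lanm\bigl(\tht+s(\thnm-\tht)\bigr)\,ds,
\]
so it suffices to show (i) $H_{n,m}\topr D$, whence $D\,H_{n,m}^{-1}\topr I$ by negative definiteness of $D$, and (ii) the joint CLT $(V/n+W/m)^{-1/2}\grad\lanm(\tht)\tod\mathcal N(0,I)$; Slutsky's lemma then yields the assertion. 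For (i) I would split $\hess\lanm=\hess\lan-\hess\rem_n$, use assumption~6 for $\hess\lan\topr D$ uniformly on $U$, and bound $\hess\log\Zet_m=\hess\Zet_m/\Zet_m-(\grad\Zet_m)(\grad\Zet_m)^\top/\Zet_m^2$ uniformly on $U$ via assumptions~7(a)--(c), giving $\sup_{\th\in U}|\hess\rem_n(\th)|\topr0$.

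The substance of the argument is (ii). Write $\grad\lanm(\tht)=A_n-\grad\rem_n(\tht)$ with $A_n=\frac{1}{n}\sum_{i=1}^n\grad\log p(Y_i|X_i,\tht)$, and note that $\grad\rem_n(\tht)=\frac{1}{n}\sum_{i=1}^n\grad\Zet_m(X_i,\tht)/\Zet_m(X_i,\tht)$ with $\grad\Zet_m(x,\tht)=\frac{1}{m}\sum_{k=1}^m\Psi(Y^k|x)$. Expanding $1/\Zet_m=1-(\Zet_m-1)+\cdots$ and discarding the factor $1/\Zet_m$ by assumptions~7(a),(b), then replacing the inner empirical average $\frac1n\sum_i\Psi(Y^k|X_i)$ by its expectation $\bar\Psi(Y^k)$, I arrive at
\[
\grad\rem_n(\tht)=B_m+G_{n,m}+E_{n,m},\qquad B_m=\frac{1}{m}\sum_{k=1}^m\bar\Psi(Y^k),\quad G_{n,m}=\frac{1}{m}\sum_{k=1}^m\Bigl[\tfrac1n\textstyle\sum_{i=1}^n\Psi(Y^k|X_i)-\bar\Psi(Y^k)\Bigr].
\]
Since $\int\grad p(y|x,\tht)\,dy=0$ (assumption~2), both $\Ex_{(Y,X)}\grad\log p=0$ (assumption~4) and $\Ex_h\bar\Psi=0$. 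As the chain is independent of the initial sample, conditioning on $X_1,\dots,X_n$ turns $G_{n,m}$ into a Markov-chain average of a centred function whose summand depends on the initial sample; the chain variance bound (geometric ergodicity, assumption~1) together with $\Ex_{Y\sim h,X\sim g}|\Psi(Y|X)|^2<\infty$ (assumption~5) gives $\Ex|G_{n,m}|^2=\Pp(1/(nm))$, while $E_{n,m}$ is a product of terms of order $\op(1)$ carrying the $\sqrt m$-rate from the chain CLT. Since $\|(V/n+W/m)^{-1/2}\|$ is of order $\sqrt{\min(n,m)}$, both $G_{n,m}$ and $E_{n,m}$ vanish after normalization.

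It remains to treat the leading term $A_n-B_m$. Here $\sqrt n\,A_n\tod\mathcal N(0,V)$ by the i.i.d.\ CLT (assumptions~4,5), whereas reversibility and geometric ergodicity of $(Y^k)$ with $\Ex_h|\bar\Psi|^2<\infty$ give $\sqrt m\,B_m\tod\mathcal N(0,W)$ with $W$ the long-run covariance. Because $A_n$ and $B_m$ are independent, writing $\Sigma_{n,m}=V/n+W/m$ and $a_{n,m}=\Sigma_{n,m}^{-1/2}t$ I would factor the characteristic function,
\[
\Ex\exp\{i\,t^\top\Sigma_{n,m}^{-1/2}(A_n-B_m)\}=\Ex\exp\{i\,a_{n,m}^\top A_n\}\cdot\Ex\exp\{-i\,a_{n,m}^\top B_m\}\to\exp\{-\tfrac12\,t^\top t\},
\]
the i.i.d.\ factor by a Taylor estimate and the chain factor by the Markov-chain CLT made uniform over the bounded set $\{a_{n,m}/\sqrt m\}$ (equicontinuity of characteristic functions with bounded second moments), the exponents combining to $-\tfrac12\,a_{n,m}^\top\Sigma_{n,m}a_{n,m}=-\tfrac12|t|^2$.

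\textbf{Main obstacle.} I expect the crux to be precisely this last assembly: controlling $\grad\rem_n(\tht)$ so that only $B_m$ survives despite the genuinely two-sample dependence of $G_{n,m}$ (a chain functional whose summand itself depends on the initial sample), and then merging an i.i.d.\ CLT with a Markov-chain CLT at a priori unrelated rates $n$ and $m$ into a single normal limit under the mixed normalization $V/n+W/m$.
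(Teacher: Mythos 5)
Your proposal is correct in outline and, apart from one sub-step, follows the same route as the paper. The skeleton is identical: the integrated-Hessian Taylor expansion with $H_{n,m}$ (the paper's $\Dnm$), the reduction to $H_{n,m}\topr D$ plus the joint CLT for $\grad\lanm(\tht)$, and, crucially, the same decomposition of the score: your $B_m$, $G_{n,m}$, $E_{n,m}$ are exactly the paper's $\bar\Zet_m(\tht)$, its term $\frac{1}{n}\sum_i\left[\grad\Zet_m(X_i,\tht)-\bar\Zet_m(\tht)\right]$, and its term $\frac{1}{n}\sum_i\left[\grad\Zet_m(X_i,\tht)/\Zet_m(X_i,\tht)-\grad\Zet_m(X_i,\tht)\right]$. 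Likewise your two key estimates are the paper's: the bound $\Ex|G_{n,m}|^2=O(1/(nm))$ obtained by conditioning on one sample and exploiting centering in the other, and the treatment of $E_{n,m}$ as (uniform closeness of $\Zet_m$ to $1$ from Assumption 7(a)) times (an ${O_{\rm p}}(1)$ chain average at scale $\sqrt m$), which the paper carries out via Cauchy--Schwarz plus a Markov-inequality moment bound. Two differences are worth recording. First, to merge the two independent CLTs under the mixed normalization you factor characteristic functions and use uniform-on-compacts convergence along the bounded sequences $a_{n,m}/\sqrt n$ and $a_{n,m}/\sqrt m$; the paper instead assumes $n/(n+m)\to a$, settles the cases $0<a<1$, $a=0$, $a=1$ by Slutsky, and removes the rate restriction by the subsequence principle. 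Both are valid; yours avoids the case analysis but needs nondegeneracy of $V$ and $W$ to keep those normalizing sequences bounded, while the paper's argument uses nothing beyond the two marginal limits. Second, the ``chain variance bound'' you invoke is not quite off-the-shelf in the form required: the chain is started from $q\neq h$, so stationary covariance bounds must be corrected by the factor $\left\Vert q/h\right\Vert_\infty$. Supplying precisely this inequality, $\left|\Ex[\phi(Y^k)\phi(Y^l)]\right|\le\left\Vert q/h\right\Vert_\infty\Vert\phi\Vert_h^2\,\rho^{l-k}$ for $h$-centered $\phi$, is the paper's key Lemma, proved there via the spectral-gap identity $\Vert P-\Pi\Vert_{L^2_h}=\rho$ for reversible geometrically ergodic chains; a complete write-up of your argument would have to include this lemma (or a precise citation), since both of your moment bounds hang on it.
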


In Theorem \ref{with_cov} we  prove that the  maximizer of (\ref{eq:fr}) satisfies
\begin{equation}\label{asymp}
\thnm
             \sim\app \mathcal{N} \left(\tht,D^{-1}\left(\frac{V}{n}+\frac{W}{m}\right)D^{-1}\right).
\end{equation}
Formula (\ref{asymp}) means that the estimator
$\thnm$ behaves like a normal vector with the mean $\tht$ when both the initial sample size $n$ and the
Monte Carlo sample size $m$ are large. Suppose that $\thn$ is a maximizer of $\lln(\th),$ that is a genuine maximum likelihood estimator, then
\begin{equation}\nonumber 
\thn \sim\app \mathcal{N} \left(\tht,\frac{1}{n}D^{-1}VD^{-1}\right),
\end{equation} 

Note that the first component of
the asymptotic variance in (\ref{asymp})  is the same as the asymptotic variance
of the maximum likelihood  estimator $\thn.$ The second component, $D^{-1}WD^{-1}/m$, is due to Monte Carlo randomness.
Furthermore, if $m$ is large, then asymptotic behaviour of $\thnm$ and $\thn$ is similar.
Finally, if the model is correctly specified, that is $g(y|x) = p(y|x , \th_0)$ for some $\th_0,$
then $\tht = \th_0$ and $D=-V.$

Now we discuss the assumptions of Theorem \ref{with_cov}. Assumption 1 relates to the MCMC sample and is not restrictive.
Conditions 2-5 are standard regularity assumptions. 
The key conditions in the theorem are Assumptions 6 and 7. They  stipulate uniform convergence of 
a sum of independent random variables (Assumption 6) and a sum along a trajectory of Markov chain (Assumption 7). We show that  these conditions  are satisfied in the widely-used autologistic model.

\begin{exam}
\label{Ex}
Consider the autologistic model with covariates defined as follows.
The response variable is a binary vector $y=(y(s),s=1,\ldots,d)\in\{0,1\}^d.$ The vector of covariates or ``explanatory'' variables is 
$x=(x(s),s=1,\ldots,l)\in \mathcal{X},$ where $\mathcal{X}$ is  a compact subset of $\Rl^{l}$. The parameter is 
$\th =(\alpha, \beta),$ where $\beta=(\beta_{r,s})$ and  $\alpha=(\alpha_{r,s})$ are matrices of dimensions 
$d\times d$ and $d\times l$, respectively. For identifiability, assume $\beta_{r,s}=\beta_{s,r}$.
The probability distribution on $\Y=\{0,1\}^d$ is defined as follows 
\begin{equation}\notag 
p(y|x,\beta,\alpha):=\frac{1}{\Z(x,\beta,\alpha)}\,\exp\left(
  \underbrace{\sum_{r=1}^{d}\sum_{s=1}^{d} \beta_{r,s}y{(r)}y{(s)}}_{\rm auto-regression} + 
  \underbrace{\sum_{r=1}^{d}\sum_{s=1}^{l}\alpha_{r,s}y{(r)}x{(s)}}_{\rm regression}\right).
\end{equation}
Obviously, the norming constant $\Z(x,\beta,\alpha)$ is  intractable for large $d$. 
We denote
$$
T(x,y) = \left(y(1)y(1),y(1)y(2), \ldots, y(d)y(d), y(1)x(1), \ldots, y(d)x(l) \right). 
$$
Then considering matrices $\alpha $ and $\beta$ as vectors we can simply write
$$
p(y|x,\theta) = \frac{\exp \left( \theta ' T(x,y)\right)}{C(x,\theta)} \:.
$$
Let $(Y^k)_{k \geq 1}$ be a Gibbs sampler on $\mathcal{Y}$ with the stationary density $h.$
Assumptions 1, 2, 4 and 5 of Theorem \ref{with_cov} are standard, therefore we focus on the remaining conditions. 
Notice that for fixed $\theta$ we have 
\begin{equation}
\label{exam1}
\lanm (\theta) \topr \Ex _{(Y,X) \sim g} \log p(Y|X,\theta) \text{ as } n,m \rightarrow \infty.
\end{equation}
Indeed, using assumption 6(a) we can easily prove that $r_n^m(\theta) \topr 0.$ Moreover, we can express
$$
\lanm(\theta) = \frac{1}{n} \sum_{i=1}^n \theta ' T(X_i, Y_i) -\frac{1}{n} \sum_{i=1}^n
\log \left[
\frac{1}{m} \sum_{k=1}^m \frac{\exp \left( \theta ' T(X_i,Y^k)\right)}{h(Y^k)}
\right] .
$$
The Hessian of $\lanm(\theta)$ is a weighted covariance matrix with positive weights that sum up to 1, so $\lanm$ is concave. 
This property and \eqref{exam1} implies convergence of maximizers in Condition 3. 
Next, we focus on Condition 6. Notice that 
$$
\hess \log p(y|x,\th)= -\hess \log \Z (x,\th)= -\frac{\hess \Z (x,\th)}{\Z(x,\th)}
+ \frac{\grad \Z (x,\th) \grad ^T \Z (x,\th)}{\Z ^2(x,\th)}\:,
$$
so this function is continuous in $x.$ Therefore, uniform convergence over the  set $U$ in this assumption is implied by 
\cite[Theorem 16(a)]{Ferguson1996} applied to the initial sample which involves i.i.d.\ random variables. Uniform convergence in  Conditions 7 (a)-(c)
relates to the MC sample which is a Markov chain. 
Notice that \cite[Theorem 16(a)]{Ferguson1996} can be extended to Markov chains, if  we apply SLLN 
for Markov chains on the top of \cite[page 110]{Ferguson1996}
 (in fact, this theorem can be extended to arbitrary sequence of random variables that admits 
SLLN). Basing on this argumentation we can prove that  Conditions 7 (a)-(c) holds using compactness of $\mathcal{X}, U$ and continuity of the function 
$T$ in $x.$ Note that  in Conditions 7(b) and 7(c) we have vector- and matrix-valued functions, respectively, but it is enough to  prove uniform 
convergence for each component. 
\end{exam}

Theorem \ref{with_cov} can be directly applied also to models with missing data as described in Section \ref{sec:model_desc}.
Indeed, using the close relation between MCMC approximations \eqref{eq:likm} and \eqref{approx_miss} we should just follow the proof of Theorem \ref{with_cov}.
Thus, under analogous regularity assumptions to Theorem \ref{with_cov} we obtain 
\begin{equation}\nonumber
\left( \frac{V}{n} + \frac{W}{m} \right)^{- \frac{1}{2}}\: D \left(\thnm -
\tht
\right) \tod \mathcal{N} (0,I), \quad n,m \rightarrow \infty,
 \end{equation}
where $V=\VAR_{X\sim g}\grad \log f(X|{\tht}),$ $D=\Ex_{X\sim g} \hess  \log f(X|{\tht})$
and  
\begin{equation}\nonumber
W=\VAR_{h} {\bar\Psi}(Y^1)  +2 \sum_{k=2}^\infty {\rm COV}_h \left({\bar\Psi}(Y^1), {\bar\Psi}(Y^k)\right)
\end{equation}
for $\bar\Psi (y)= \Ex_{X \sim g} {\nabla f (y|X,{\tht})}/{h(y)}\:.$
Therefore, we extend \cite[Theorem 2.3]{GeyerSung2007} from the i.i.d. case to the Markov chain case. The only price we pay for this generalization is 
(mild) Condition 1. Moreover,  
we  weaken their condition (6). Namely, we replace the requirement that the class is Donsker by the  requirement that the class is Glivenko-Cantelli.
This fact follows from  argumentation used to obtain
\begin{equation}\nonumber
\left( \frac{V}{n} + \frac{W}{m} \right)^{- \frac{1}{2}} \grad \lanm(0)
 \rightarrow_d \mathcal{N} (0,I), \quad n,m \rightarrow \infty
\end{equation}
that is needed in the proof of Theorem \ref{with_cov} and its analog in the proof of \cite[Theorem 2.3]{GeyerSung2007}. 
While \cite{GeyerSung2007} uses an arduous and complicated method based on weak convergence of stochastic processes and its properties 
(see \cite[Lemma A.4]{GeyerSung2007}), we show in Section \ref{sec:proof} that this analysis can be based on much simpler methods.

\section{Proofs}
\label{sec:proof} 

The following lemma about Markov chains plays the key role in the proof of Theorem {\ref{with_cov}}.

\begin{lem}
\label{aux_lemma}
Let $(Y^k)_{k \geq 1}$ be a reversible and geometrically ergodic homogeneous Markov chain with spectral gap $1-\rho$. Assume its stationary distribution $\pi$
 and initial distribution $\nu$  are such that $\left\Vert {\d\nu}/{\d\pi}\right\Vert_\infty =\sup_y |(\d \nu/\d \pi)(y)| < \infty $.  
 Let $\gie$ be a function such that $\int \gie(y)\pi(\d y) =0$ and $\int \gie(y)^2\pi(\d y) =\Vert \gie\Vert_\pi^2<\infty$. For all $k\leq l$ it holds that 
 $$\left|\mathbb{E}[ \gie(Y^k)\gie(Y^l)]\right|\leq \left\Vert \frac{\d\nu}{\d\pi}\right\Vert_\infty 
\Vert \gie\Vert_\pi^2 \, \rho^{l-k}.
$$
\end{lem}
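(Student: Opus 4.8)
The plan is to reduce the two-point correlation to a single integral against the law of $Y^k$, and then to exploit reversibility (self-adjointness of the transition operator on $L^2(\pi)$) together with the spectral gap. First I would write $P$ for the transition kernel and $(Pg)(y)=\int g(z)\,P(y,\d z)$ for the operator it induces on functions. Conditioning on $Y^k$ and using the Markov property, for $l\geq k$,
$$
\mathbb{E}[\gie(Y^k)\gie(Y^l)]=\mathbb{E}\bigl[\gie(Y^k)\,(P^{\,l-k}\gie)(Y^k)\bigr]
=\int \gie\cdot (P^{\,l-k}\gie)\ \d(\nu P^{k-1}),
$$
since $Y^k$ has law $\nu P^{k-1}$. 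The only remaining task is to control this integral.

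Second, I would transfer the $k-1$ initial steps from the measure onto its density. Reversibility means $P$ is self-adjoint on $L^2(\pi)$; consequently, if a measure has density $r$ with respect to $\pi$, then after one step it has density $Pr$ (this follows from $\int g\,\d(\mu P)=\int (Pg)\,r\,\d\pi=\int g\,(Pr)\,\d\pi$). Hence $\nu P^{k-1}$ has density $r_{k-1}:=P^{\,k-1}(\d\nu/\d\pi)$ with respect to $\pi$. Because $P$ is an averaging operator it contracts the supremum norm, $\Vert Pr\Vert_\infty\le\Vert r\Vert_\infty$, so $\Vert r_{k-1}\Vert_\infty\le\Vert \d\nu/\d\pi\Vert_\infty$. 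Bounding the density by its supremum and then applying Cauchy--Schwarz in $L^2(\pi)$ gives
$$
\bigl|\mathbb{E}[\gie(Y^k)\gie(Y^l)]\bigr|
\le \Vert \d\nu/\d\pi\Vert_\infty \int |\gie|\,|P^{\,l-k}\gie|\,\d\pi
\le \Vert \d\nu/\d\pi\Vert_\infty\,\Vert \gie\Vert_\pi\,\Vert P^{\,l-k}\gie\Vert_\pi .
$$
Since $\int \gie\,\d\pi=0$, the function $\gie$ lies in the orthogonal complement of the constants, on which the spectral gap gives $\Vert P^{\,l-k}\gie\Vert_\pi\le\rho^{\,l-k}\Vert \gie\Vert_\pi$; substituting this bound yields the claimed inequality.

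The main obstacle is precisely this last ingredient: justifying $\Vert P^{n}\gie\Vert_\pi\le\rho^{n}\Vert\gie\Vert_\pi$ for mean-zero $\gie$ from the stated hypotheses. This is where reversibility is used jointly with geometric ergodicity: for a reversible chain $P$ is a self-adjoint contraction on $L^2(\pi)$, its spectrum is real and contained in $[-1,1]$, and the spectral gap $1-\rho$ means that the spectrum restricted to the mean-zero subspace $L^2_0(\pi)$ lies in $[-\rho,\rho]$. Since for self-adjoint operators the operator norm equals the spectral radius, the functional calculus delivers the contraction $\Vert P^{n}|_{L^2_0(\pi)}\Vert\le\rho^{n}$. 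I would make this identification explicit, as it is the step that converts the abstract ``spectral gap'' assumption into the quantitative geometric decay $\rho^{\,l-k}$ that drives the bound; everything else is the elementary algebra of conditioning, sup-norm contraction of $P$, and Cauchy--Schwarz.
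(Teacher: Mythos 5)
Your proof is correct and takes essentially the same route as the paper's: condition on $Y^k$ to reduce the correlation to $\int \gie\,(P^{l-k}\gie)$ integrated against the law of $Y^k$, pull out $\Vert \d\nu/\d\pi\Vert_\infty$, apply Cauchy--Schwarz in $L^2_\pi$, and convert the spectral gap into the factor $\rho^{l-k}$. The only minor differences are that the paper dominates the law of $Y^k$ by $\Vert \d\nu/\d\pi\Vert_\infty\,\pi$ using stationarity alone (your density-propagation step invokes reversibility where it is not actually needed), and it justifies the geometric decay via the operator-norm identity $\Vert P^{l-k}-\Pi\Vert_{L^2_\pi}\le \Vert P-\Pi\Vert_{L^2_\pi}^{l-k}=\rho^{l-k}$ rather than via the spectral theorem and functional calculus.
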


Note that we will apply Lemma \ref{aux_lemma} in a situation where both the distributions $\pi$ and $\nu$ have densities $h$ and $q$, respectively.
Then the Radon-Nikodym derivative $({\d\nu}/{\d\pi})(y)$ is simply $q(y)/h(y)$.
First we give a proof of the main result, then a proof of Lemma \ref{aux_lemma}.

\begin{proof}[Proof of Theorem {\ref{with_cov}}]
Let us first describe the outline of the proof. 
The beginning of the proof is standard, namely we define
$$
\Dnm=\int_0^1 \hess \lanm \left( \tht + s(\thnm -\tht)
\right) ds.
$$
By Taylor expansion and the fact that $\thnm$ maximizes $\lanm$ we obtain
$$
-\grad \lanm (\tht) = \Dnm (\thnm-\tht).
$$
If we  show that 
\begin{equation}
\label{formD}
\Dnm \topr D, \quad n,m \rightarrow \infty
\end{equation}
 and 
\begin{equation}
\label{AS1}
\left( \frac{V}{n} + \frac{W}{m} \right)^{- \frac{1}{2}} \grad \lanm(\tht)
 \tod \mathcal{N} (0,I), \quad n,m \rightarrow \infty,
\end{equation}
then the conclusion of the theorem will follow from \eqref{formD}, \eqref{AS1} and  Slutsky's theorem. 

The argumentation to obtain \eqref{formD} is the same as in the proof of 
\cite[Lemma A.6]{GeyerSung2007}. The fact that the MC sample is a Markov chain instead of independent random variables  does not play any role in it. 
Therefore, we omit the proof of  \eqref{formD} and focus on \eqref{AS1}.

To show \eqref{AS1} we express $\grad \lanm (\tht)$ as follows:
 \begin{eqnarray}
 \grad \lanm(\tht) &=&  \grad \lan(\tht)-\grad \renm(\tht) 
                = \grad \lan(\tht)-\frac{1}{n}\sum_{i=1}^n \grad \log \Zet_m(X_i,\tht) \nonumber \\ 
&=& \grad \lan(\tht) \nonumber  \\ 
&-& \frac{1}{n}\sum_{i=1}^{n }\left[\frac{\nabla \Zet _m (X_i,\tht)}
                 {\Zet _m(X_i,\tht)}- \nabla \Zet _m (X_i,\tht) \right]\label{Term2}\\
&-&  \frac{1}{n}\sum_{i=1}^{n }\left[\grad \Zet _m (X_i,\tht)- \bar\Zet_m(\tht)\right] \label{Term3} \\ 
&-&   \bar\Zet_m(\tht), \nonumber
\end{eqnarray}
where  $\bar\Zet_m(\th)=\Ex_{X\sim g} \grad  \Zet _m (X,\th).$
The first term of the above displayed equation depends only on the initial, i.i.d.\ sample and $\sqrt{n} \nabla \lan(\tht)\tod \mathcal{N}(0,V)$
as $n\to \infty$.
This fact is well-known and follows from the CLT for i.i.d.\ variables. The last term  depends only on the
MCMC sample. Since
$$
 \bar\Zet_m(\tht)=\frac{1}{m}\sum_{1=1}^{m}\bar \Psi(Y^k),
$$
we can apply the CLT for Markov chains to infer that $\sqrt{m}   \bar\Zet_m(\tht)\tod \mathcal{N}(0,W)$
as $m\to \infty$. We will later prove that both the middle terms, \eqref{Term2} and \eqref{Term3} are 
negligible in the sense that they are $\op(1/\sqrt{m})$. Provided this is done, the rest of the proof is easy. 
We first assume that $\frac{n}{n+m}\to a$ and consider three cases corresponding to rates at which $n$ and $m$ go to infinity:
$0<a<1$,  $a=0$ and $a=1$. Once \eqref{AS1} is proved in these three special cases, the subsequence principle shows that it is valid in general 
(for $n\to \infty$ and $m\to \infty$ at arbirary rates). We consider only the case $0<a<1,$ because argumentation for the others is similar.
Since the Monte Carlo sample is independent of the observed one, we infer that
\begin{eqnarray}
&&        \sqrt{n+m} \grad \lanm(\tht) \nonumber\\
&=& \sqrt{\frac{n+m}{n}} \: \sqrt{n} \grad \lan(\tht)
- \sqrt{\frac{n+m}{m}} \: \sqrt{m}  \bar\Zet_m(\tht)+\op(1) \nonumber\\
   &\tod& \mN(0, V/a + W/(1-a)). \nonumber
\end{eqnarray}
 To finish the proof, just note that
$$\sqrt{n+m} \left(V/a + W/ (1-a)\right)^{-\frac{1}{2}}
\left(V/n + W/ m \right)^{\frac{1}{2}} \rightarrow I \quad n,m \rightarrow \infty.$$

We are left with the task of bounding the terms \eqref{Term2} and \eqref{Term3}. 
This is the difficult and novel part of the proof. Since the MC sample $Y^1,\ldots,Y^m$ is a Markov chain,   we will need Lemma  \ref{aux_lemma}.
(Note that behaviour of these terms would be much easier to examine if we considered
an unrealistic scenario of i.i.d.\ Monte Carlo, as in \cite{MNPR2016a}.)

We start with (\ref{Term3}). We are to show that
\begin{eqnarray}
 A_n^m &=& \frac{\sqrt{m}}{n}\sum_{i=1}^{n }\left[\grad \Zet _m (X_i,\tht)-\bar\Zet_m(\tht)\right]\\
       &=& \frac{1}{n}\sum_{i=1}^n \frac{1}{ \sqrt{m}} \sum_{k=1}^m  \left[\Psi(Y^k| X_i)   -  \bar{\Psi}(Y^k)\right] \nonumber
\end{eqnarray}
goes to 0 in probability, as $m,n\to\infty$. This a vector-valued expression, but it is enough to bound its components separately. 
Let $a_n^m$, $\psi(y|x)$ and $\bar\psi(y)$ denote any single component of $A_n^m$, $\Psi(y|x)$ and $\bar\Psi(y)$, respectively.
 Write also $\phi(y|x)=\psi(y|x)- \bar\psi(y)$. We will bound
\begin{eqnarray}
\label{form1} \Ex (a_n^m)^2
=\Ex\left[\frac{1}{n}\sum_{i=1}^n \frac{1}{ \sqrt{m}} \sum_{k=1}^m  \phi(Y^k|X_i)\right]^2, 
\end{eqnarray}
where the symbol $\Ex$ refers to the expectation with respect to the both samples (the i.i.d. variables $X_1,\ldots,X_n$ and Markov chain  $Y^1,\ldots,Y^m$ 
started at $\nu$). If we first fix $Y^k$s then the random variables $\phi_m(X_i)=\sum_{k=1}^m\phi(Y^k|X_i)/\sqrt{m}$ are i.i.d.\ and centered.
Therefore the expectation with respect to the $X_i$s in \eqref{form1}
is the variance of a mean of i.i.d.\ summands, equal to $\frac{1}{n}$ times the variance of a single summand. Consequently, 
\begin{equation}  \Ex (a_n^m)^2
\label{form2}
=\frac{1}{n}  \Ex \left[\frac{1}{ \sqrt{m}} \sum\limits_{k=1}^m  \phi(Y^k| X)\right]^2 \nonumber
\end{equation}
(expectation with respect to $X\sim g$ and the MC sample $Y^k$). Clearly, 
\begin{equation}
\label{form3}  \Ex (a_n^m)^2=
\frac{1}{nm}  \sum_{k=1}^m \Ex \phi^2(Y^k | X) + 
\frac{2}{nm}  \sum_{1\leq k<l\leq m} \Ex \phi(Y^k | X) \phi(Y^l|X). 
\end{equation}
Now, if we fix $X$ and consider randomness in $Y^k$s then $\phi(Y^k|X)$ is a functional of the Markov chain.
Since $\Ex_{Y \sim h}  \phi(Y|x)=0$ for each $x$, we are in a position to apply Lemma \ref{aux_lemma}.
Consequently,  for $k \leq l$ we have
\begin{equation}
\label{form4}
\Ex_{X \sim g,\nu} \phi(Y^k| X) \phi(Y^l|X) \leq \left\Vert \frac{q}{h}\right\Vert_\infty    
 \Ex_{X \sim g, Y \sim h} \phi^2(Y|X)  \rho ^{l-k}. \nonumber
\end{equation}
The norm and the expectation on the right-hand side of \eqref{form4} are finite, by Assumptions 1 and 5 of the Theorem. 
Since $m+2\sum_{1\leq k<l\leq m} \rho^{l-k}\leq m(1+\rho)/(1-\rho)$, from \eqref{form3} we obtain 
\begin{equation}
 \Ex (a_n^m)^2\leq \frac{1}{n} \left\Vert \frac{q}{h}\right\Vert_\infty  \Ex_{X \sim g, Y \sim h} \phi^2(Y|X) 
     \frac{1+\rho}{1-\rho}.
\end{equation}
Therefore $\Ex (a_n^m)^2\to 0$ as $m\to\infty$ and $n\to\infty$ (at an arbitrary rate). Consequently,
$a_n^m\topr 0$ and we have proved asymptotic negligibility of \eqref{Term3} (i.e.\ that this term is $\op(1/\sqrt{m})$).

The last step is bounding \eqref{Term2}. We are to show that
\begin{eqnarray}
\label{asymp121}
 B_n^m&=&\frac{\sqrt{m}}{n}\sum_{i=1}^{n }\left[\frac{\nabla \Zet _m (X_i,\tht)}
                 {\Zet _m(X_i,\tht)}- \nabla \Zet _m (X_i,\tht) \right] \nonumber \\
       &=&\frac{1}{n}\sum_{i=1}^{n }\frac{(1-\Zet _m (X_i,\tht))}
                 {\Zet _m(X_i,\tht)}\frac{1}{\sqrt{m}} \sum_{k=1}^m\Psi(Y^k|X_i)\nonumber         
\end{eqnarray}
goes to 0 in probability. As in the previous part of the proof, we will consider a single component $b_n^m$ of vector $B_n^m$.
By Cauchy-Schwarz inequality, 
\begin{equation}
|b_n^m|\leq\sqrt{\frac{1}{n} \sum_{i=1}^n \frac{[\Zet _m(X_i,\tht) - 1]^2}{\Zet ^2_m(X_i,\tht)}}\:
\sqrt{\frac{1}{n} \sum_{i=1}^n \left| \frac{1}{ \sqrt{m}} \sum_{k=1}^m
\psi(Y^k| X_i) \right|^2}.
\end{equation}
By Assumption 7(a) we  obtain that for arbitrary
$\varepsilon>0, \eta >0$ and sufficiently large $m$ with probability at least $1-\eta$
for every $x \in \mX$
\begin{equation}\nonumber
1-\varepsilon \leq \Zet _m (x,\tht) \leq 1+\varepsilon.
\end{equation}
Therefore, the term under the first square root in (\ref{asymp121}) tends in probability to 0, because with probability at least $1-\eta$
\begin{equation}\nonumber
\frac{1}{n} \sum_{i=1}^n \frac{[\Zet _m(X_i,\tht) - 1]^2}{\Zet ^2_m(X_i,\tht)} \leq \sup_{x \in \mX} \frac{[\Zet _m(x,\tht) - 1]^2}{\Zet ^2_m(x,\tht)} \leq \frac{\varepsilon^2}{(1-\varepsilon )^2} \:,
\end{equation}
if $m$ is sufficiently large.
To show that the second square root in (\ref{asymp121}) is bounded in probability we use  Markov's inequality and proceed 
similarly to bounding \eqref{form2}. We can apply Lemma 
\ref{aux_lemma}, because $\Ex_{Y \sim h} \psi (Y|x)=0$ for each $x$ and we obtain
\begin{equation}\nonumber
\Ex \frac{1}{n} \sum_{i=1}^n \left| \frac{1}{ \sqrt{m}} \sum_{k=1}^m
\psi(Y^k| X_i) \right|^2\leq \left\Vert \frac{q}{h}\right\Vert_\infty  \Ex_{X \sim g, Y \sim h} \psi^2(Y|X) 
     \frac{1+\rho}{1-\rho}.
\end{equation}
It follows that $b_n^m\topr 0$ and this ends the proof.
\end{proof}

\begin{proof}[Proof of Lemma {\ref{aux_lemma}}]
We consider Hilbert space $L^2_\pi$ of functions $\gie:\mY\to\Rl$ with finite norm $\int \gie(y)^2\pi(\d y) =\Vert \gie\Vert_\pi^2$.  
The transition kernel $P(y,\;\cdot\;)$ of Markov chain $(Y^k)_{k \geq 1}$ is associated with linear operator $P$ defined by 
$ P\gie(y)=\int_{\mY}\gie(z)P(y,\d z)$. We also define operator $\Pi$ by $\Pi\gie(y)=\int_{\mY}\gie(z)\pi(\d z)$. 
We assume that the Markov chain is reversible and geometrically ergodic that is equivalent to 
$\Vert P-\Pi\Vert_{L^2_\pi} =\rho$, where $\Vert\; \cdot\; \Vert_{L^2_\pi}$ is the operator norm and $1-\rho>0$ is the spectral gap \cite{RobertsRosenthal1997}.

Below, $\Ex_\nu$ denotes the expectation with respect to the Markov chain with the initial distribution $\nu$. We start with the following observation: 
 \begin{eqnarray*}
\Ex_\nu [\gie(Y^k)\gie(Y^l)]&=&\Ex_\nu [\mathbb{E} (\gie(Y^k)\gie(Y^l)|Y^k)] \nonumber\\
&=&\Ex_\nu [\gie(Y^k) \Ex (\gie(Y^l)|Y^k)] \nonumber\\
  &=&\Ex _\nu [\gie(Y^k)P^{l-k}\gie(Y^k)].
\end{eqnarray*}
Using the fact that $\pi$ is a stationary distribution we obtain 
\begin{eqnarray*}
\left| \mathbb{E}_\nu [\gie(Y^k)P^{l-k}\gie(Y^k)]\right|&=&\left|\int \gie(y)P^{l-k}\gie(y) \int P^k(z, \d y) \nu(\d z)\right|\\
 &=&\left|\int \gie(y)P^{l-k}\gie(y) \int P^k(z, \d y)\frac{\d\nu}{\d\pi}(z) \pi(\d z))\right|\\
 &\leq&  \left\Vert \frac{ \d \nu}{  \d \pi}\right\Vert_\infty \int |\gie(y)P^{l-k}\gie(y)|\int P^k(z, \d y)\pi( \d z)\\
 &=&\left\Vert \frac{\rm \d \nu}{\rm  \d \pi}\right\Vert_\infty  \int |\gie(y)P^{l-k}\gie(y)| \pi(\rm \d y)
 \end{eqnarray*}
Therefore, we obtain that 
\begin{equation}\nonumber
\left| \Ex_\nu [\gie(Y^k)\gie(Y^l)] \right|  \leq \left\Vert \frac{\rm \d \nu}{\rm  \d \pi}\right\Vert_\infty \Ex_\pi \left| \gie(Y)P^{l-k}\gie(Y)
\right|.
\end{equation}
Moreover, using the fact that $\Pi\gie=0,$ the Cauchy-Schwarz inequality, the definition of the operator norm and the property that 
$P^k -\Pi = (P- \Pi)^k $ for $k \geq 1$ we obtain
\begin{eqnarray*}
   \mathbb{E}_\pi |\gie(Y)P^{l-k}\gie(Y)|&=&\int|\gie(y)(P^{l-k}-\Pi)\gie(y)|\pi(\rm \d y)\\
   &\leq& \Vert \gie\Vert_\pi \, \Vert(P^{l-k}-\Pi)\gie\Vert_\pi\\
   &\leq& \Vert \gie\Vert_\pi^2 \, \Vert P^{l-k}-\Pi\Vert_{L^2_\pi}\\
   &\leq& \Vert \gie\Vert_\pi^2 \, \Vert P-\Pi\Vert_{L^2_\pi} ^{l-k}= \Vert \gie\Vert_\pi^2 \,\rho^{l-k},
 \end{eqnarray*}
which finishes the proof.
\end{proof}


\end{document}